\newtheorem{theorem}{Theorem}
\theoremstyle{plain}
\newtheorem{corollary}{Corollary}
\newtheorem{lemma}{Lemma}
\newtheorem{proposition}{Proposition}
\newtheorem{remark}{Remark}
\numberwithin{equation}{section}
\begin{document}
\title[Zeta functions]{A few remarks on values of Hurwitz Zeta function at
natural and rational arguments}
\author{Pawe\l\ J. Szab\l owski}
\address{Department of Mathematics and Information Sciences,\\
Warsaw University of Technology\\
ul. Koszykowa 75, 00-662 Warsaw, Poland}
\email{pawel.szablowski@gmail.com}
\date{April 2014}
\subjclass[2000]{ Primary 11M35, 11M06, Secondary 11M36, 11J72; }
\keywords{Riemann zeta function, Hurwitz zeta function, Bernoulli numbers,
Dirichlet series}

\begin{abstract}
We exploit some properties of the Hurwitz zeta function $\zeta (n,x)$ in
order to study sums of the form $\frac{1}{\pi ^{n}}\sum_{j=-\infty }^{\infty
}1/(jk+l)^{n}$ and \newline
$\frac{1}{\pi ^{n}}\sum_{j=-\infty }^{\infty }(-1)^{j}/(jk+l)^{n}$ for $%
2\leq n,k\in \mathbb{N},$ and integer $l\leq k/2$. We show that these sums
are algebraic numbers. We also show that $1<n\in \mathbb{N}$ and $p\in 
\mathbb{Q\cap (}0,1\mathbb{)}$ $:$ the numbers $(\zeta (n,p)+(-1)^{n}\zeta
(n,1-p))/\pi ^{n}$ are algebraic. On the way we find polynomials $s_{m}$ and 
$c_{m}$ of order respectively $2m+1$ and $2m+2$ such that their $n-$th
coefficients of sine and cosine Fourier transforms are equal to $%
(-1)^{n}/n^{2m+1}$ and $(-1)^{n}/n^{2m+2}$ respectively.
\end{abstract}

\maketitle

\section{Introduction}

Firstly we find polynomials $s_{m}$ and $c_{m}$ of orders respectively $2m+1$
and $2m+2$ such that their $n-$th coefficients in respectively sine and
cosine Fourier series are of the form $(-1)^{n}/n^{2m+1}$ and $%
(-1)^{n}/n^{2m+2}.$ Secondly using these polynomials we study sums of the
form: 
\begin{equation*}
S(n,k,l)\allowbreak =\allowbreak \sum_{j=-\infty }^{\infty }\frac{1}{%
(jk+l)^{n}},~\hat{S}(n,k,l)=\sum_{j=-\infty }^{\infty }\frac{(-1)^{j}}{%
(jk+l)^{n}}.
\end{equation*}%
for $n,$ $k,$ $l\in \mathbb{N}$ such that $n\geq 2,$ $k\geq 2$ generalizing
known result of Euler (recently proved differently by Elkies (\cite{Elk03}))
stating that the number $S(n,4,1)/\pi ^{n}\allowbreak $ is rational. In
particular we show that the numbers $S(n,k,j)/\pi ^{n}$ and $\hat{S}%
(n,k,j)/\pi ^{n}$ for $\mathbb{N\ni }n,k\geq 2,$ and $1\leq j<k$ are
algebraic.

The results of the paper are somewhat in the spirit of \cite{cvij+kal99} in
the sense that the sums we study are equal to certain combinations of values
of the Hurwitz zeta function calculated for natural and rational arguments.

The paper is organized as follows. In the present section we recall basic
definitions and a few elementary properties of the Hurwitz and Riemann zeta
functions. We present them for completeness of the paper and also to
introduce in this way our notation.

In Section \ref{main} are our main results. Longer proof are shifted to
Section \ref{dowody}.

To fix notation let us recall that the following function

\begin{equation*}
\zeta (s,x)\allowbreak =\allowbreak \sum_{n\geq 0}1/(n+x)^{s},
\end{equation*}%
defined for all $\func{Re}(s)>1$ and $\ 1\geq \func{Re}(x)>0$ is called
Hurwitz zeta function while the function $\zeta (s,1)\overset{df}{=}\zeta
(s) $ is called the Riemann zeta function. One shows that functions $\zeta
(s,x)$ and $\zeta (s)$ can be extended to meromorphic functions of $s$
defined for all $s\neq 1$.

Let us consider slight generalization of the Hurwitz zeta function namely
the function;%
\begin{equation*}
\hat{\zeta}(s,x)=\allowbreak \sum_{n\geq 0}(-1)^{n}/(n+x)^{s},
\end{equation*}%
for the same as before $s$ and $x.$

Using multiplication theorem for $\zeta (s,x)$ and little algebra we get:%
\begin{equation}
\zeta (s,x)\allowbreak =\allowbreak (\zeta (s,x/2)+\zeta
(s,(1+x)/2))/2^{s},~~\hat{\zeta}(s,x)=(\zeta (s,x/2)-\zeta
(s,(1+x)/2))/2^{s},  \label{HZ}
\end{equation}%
\begin{equation}
S(n,k,l)\allowbreak =\allowbreak \frac{1}{k^{n}}(\zeta (n,\frac{l}{k}%
)+(-1)^{n}\zeta (n,1-\frac{l}{k})),~\hat{S}(n,k,l)\allowbreak =\allowbreak 
\frac{1}{k^{n}}(\hat{\zeta}(n,\frac{l}{k})+(-1)^{n+1}\hat{\zeta}(n,1-\frac{l%
}{k})),  \label{SHZ}
\end{equation}
\begin{gather}
S(n,k,l)\allowbreak =\allowbreak S(n,2k,l)+(-1)^{n}S(n,2k,k-l),  \label{S1_2}
\\
\hat{S}(n,k,l)\allowbreak =\allowbreak S(n,2k,l)+(-1)^{n+1}S(n,2k,k-l),
\label{S*} \\
S(n,k,k-l)\allowbreak =\allowbreak (-1)^{n}S(n,k,l),~\hat{S}%
(n,k,k-l)=(-1)^{n+1}\hat{S}(n,k,l).  \label{S**}
\end{gather}

Hence there is sense to define sums $S$ and $\hat{S}$ for $l\leq k/2$ only.

In the sequel $\mathbb{N}$ and $\mathbb{N}_{0}$ will denote respectively
sets of natural and natural plus $\left\{ 0\right\} $ numbers while $\mathbb{%
Z}$ and $\mathbb{Q}$ will denote respectively sets of integer and rational
numbers. $B_{k}$ will denote $k-th$ Bernoulli number. It is known that $%
B_{2n+1}\allowbreak =\allowbreak 0$ for $n\geq 1$ and $B_{1}\allowbreak
=\allowbreak 1/2.$ One knows also that 
\begin{equation}
\zeta (2l)\allowbreak =\allowbreak (-1)^{l+1}B_{2l}\frac{(2\pi )^{2l}}{2(2l)!%
}\allowbreak  \label{zeta_parz}
\end{equation}%
for all $l\in \mathbb{N}$.

For more properties of the Hurwitz zeta function and the Bernoulli numbers
see \cite{Kara-Voronin92}, \cite{Chan53}, \cite{Magnus66} and \cite{Car68}.

Having this and (\ref{S1_2}) and applying multiplication theorem for $\zeta
(s,x)$ we can easily generalize result of Euler reminded by Elkies in \cite%
{Elk03}). Let us recall that since Euler times it is known that. 
\begin{equation*}
\frac{1}{\pi ^{n}}S(n,4,1)=\left\{ 
\begin{array}{ccc}
\frac{(2^{2l}-1)}{2(2l)!}(-1)^{l+1}B_{2l} & if & n=2l, \\ 
\frac{(-1)^{l}E_{2l}}{2^{2l+2}(2l)!} & if & n=2l+1.%
\end{array}%
\right.
\end{equation*}%
Apart from this result one can easily show (manipulating multiplication
theorem for Hurwitz zeta function) that for for $\in \mathbb{N}$: 
\begin{eqnarray*}
\frac{(2l)!}{\pi ^{2l}}S(2l,2,1)\allowbreak &=&\allowbreak \frac{2(2l)!}{%
(2\pi )^{2l}}\zeta (2l,\frac{1}{2})\allowbreak =\allowbreak
(2^{2l}-1)(-1)^{l+1}B_{2l},~ \\
\frac{2(2l)!}{\pi ^{2l}}S(2l,3,1)\allowbreak &=&\allowbreak \allowbreak (%
\frac{2}{3})^{2l}(-1)^{l+1}(3^{2l}-1)B_{2l},~ \\
\frac{2(2l)!}{\pi ^{2l}}S(2l,6,1)\allowbreak &=&\allowbreak
(2^{2l}-1)(1-3^{-2l})(-1)^{l+1}B_{2l}.
\end{eqnarray*}%
Notice also that from (\ref{HZ}) it follows that $S(2l+1,2,1)\allowbreak
=\allowbreak 0$ for $l\in \mathbb{N}$\textbf{\ }and from (\ref{HZ}) and (\ref%
{SHZ}) we get: $\hat{S}(2l,2,1)\allowbreak =\allowbreak 0$ and $\hat{S}%
(2l+1,2,1)\allowbreak =\allowbreak 2S(2l+1,4,1).$ Results concerning $%
S(2l,6,1)$ were also obtained in \cite{cvij+kal99}(16b).

\section{Main results\label{main}}

To get relationships between particular values of $\zeta (n,q)$ for $n\in 
\mathbb{N}$ and $q\in \mathbb{Q}$ we will use some elementary properties of
the Fourier transforms.

Let us denote 
\begin{equation*}
\mathcal{F}_{n}^{s}(f(.))\allowbreak =\mathcal{F}_{n}^{s}=\allowbreak \frac{1%
}{\pi }\int_{-\pi }^{\pi }f(x)\sin (nx)dx,~\mathcal{F}_{n}^{c}(f(.))%
\allowbreak \allowbreak =\mathcal{F}_{n}^{c}\allowbreak =\allowbreak \frac{1%
}{\pi }\int_{-\pi }^{\pi }f(x)\cos (nx)dx,
\end{equation*}%
for $n\in \mathbb{N}_{0}$. We will abbreviate $\mathcal{F}%
_{n}^{s}(f(.))\allowbreak $ and $\mathcal{F}_{n}^{c}(f(.))$ to $\mathcal{F}%
_{n}^{s}$ and $\mathcal{F}_{n}^{c}$ if the function $f$ is given. We have: 
\begin{equation*}
f(x)\allowbreak =\allowbreak \frac{1}{2}\mathcal{F}_{0}^{c}+\sum_{n\geq 1}%
\mathcal{F}_{n}^{c}\cos (nx)+\sum_{n\geq 1}\mathcal{F}_{n}^{s}\sin (nx)
\end{equation*}%
for all points of continuity of $f$ and $\left( f(x^{+})+f\left(
x^{-}\right) \right) /2$ if at $x$ $f$ has jump discontinuity as a result of
well known properties of Fourier series.

Let us define two families of polynomials: 
\begin{gather}
c_{m}(x)\allowbreak =\allowbreak \frac{x^{2}(-1)^{m+1}}{(2m+2)!}%
\sum_{j=0}^{m}\binom{2m+2}{2j}x^{2(m-j)}\pi ^{2j}B_{2j}(2^{2j-1}-1),
\label{wp} \\
s_{m}(x)=\frac{x\left( -1\right) ^{m}}{(2m+1)!}\sum_{j=0}^{m}\binom{2m+1}{2j}%
x^{2(m-j)}\pi ^{2j}B_{2j}(2^{2j-1}-1).  \label{wn}
\end{gather}%
for all $m\in \mathbb{N}_{0}.$

We have the following auxiliary lemma.

\begin{lemma}
\label{fourier} For $n\in \mathbb{N},$ $m\in \mathbb{N}_{0}$ 
\begin{gather}
\mathcal{F}_{n}^{s}(s_{m}(.))\allowbreak =\allowbreak \frac{\left( -1\right)
^{n}}{n^{2m+1}},~~\mathcal{F}_{n}^{c}(c_{m}(.))\allowbreak =\allowbreak 
\frac{\left( -1\right) ^{n}}{n^{2m+2}}.  \label{sniep} \\
\mathcal{F}_{0}^{c}(c_{m}(.))\allowbreak =\allowbreak \pi
^{2m+2}(-1)^{m}B_{2m+2}\frac{2(2^{2m+1}-1)}{(2m+2)!}\allowbreak =\allowbreak
2\zeta (2m+2)(1-2^{-2m-1})  \notag
\end{gather}
\end{lemma}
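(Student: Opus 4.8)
The plan is to verify all three identities by a direct Fourier-analytic computation, exploiting the well-known Fourier expansions of the classical Bernoulli polynomials restricted to $[-\pi,\pi]$. First I would recall that for $x\in(0,2\pi)$ one has the standard expansion of the Bernoulli polynomial $B_{r}(x/(2\pi))$ as a finite combination of $e^{inx}/n^{r}$; equivalently, the functions $x\mapsto x^{r}$ generate, after periodization on $[-\pi,\pi]$, sine series (for odd symmetry pieces) and cosine series whose coefficients are explicit rational multiples of $\pi^{\,r-k}$ times binomial and Bernoulli factors. Concretely I would compute $\mathcal F_n^{s}(x^{2j+1})$ and $\mathcal F_n^{c}(x^{2j})$ by repeated integration by parts on $[-\pi,\pi]$, obtaining closed forms that are polynomials in $1/n$ with coefficients involving $(-1)^n$. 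Summing these against the binomial coefficients appearing in \eqref{wp} and \eqref{wn}, the point is that the particular combination of powers chosen in $s_m$ and $c_m$ is designed precisely so that all terms with powers of $1/n$ other than $1/n^{2m+1}$ (resp. $1/n^{2m+2}$) telescope to zero, leaving exactly $(-1)^n/n^{2m+1}$ (resp. $(-1)^n/n^{2m+2}$). The factor $(2^{2j-1}-1)$ multiplying $B_{2j}$ is the signature that one is really working with the "alternating" Bernoulli (i.e. Euler-type) structure coming from evaluation at the midpoint, consistent with \eqref{HZ}; I would make this explicit by relating $\int_{-\pi}^{\pi}x^{r}\cos(nx)\,dx$ to values of $\zeta(\cdot,1/2)$-type sums via \eqref{zeta_parz} and the multiplication formula.

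An efficient way to organize the bookkeeping is to prove the two Fourier identities simultaneously by induction on $m$. The base case $m=0$ gives $s_0(x)=x$ and $c_0(x)=-x^2/2$ (up to the normalizations in \eqref{wp}, \eqref{wn} with $B_0=1$), and $\mathcal F_n^{s}(x)=2(-1)^{n+1}/n\cdot(-1)=\,$ the claimed $(-1)^n/n$, while $\mathcal F_n^{c}(x^2)$ computed by parts gives $4(-1)^n/n^2$, matching after the prefactor. For the inductive step, I would integrate by parts twice: $\mathcal F_n^{c}(c_m)$ relates to $\mathcal F_n^{s}$ of $c_m'$ and then to $\mathcal F_n^{c}$ of $c_m''$, and a short computation shows $c_m''$ is, up to boundary terms and a $\pi^2$-shift, proportional to $c_{m-1}$ plus lower-order polynomial corrections whose Fourier coefficients are already known; the recursion $1/n^{2m+2} = (1/n^2)\cdot 1/n^{2m}$ then drops out. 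The constant term $\mathcal F_0^{c}(c_m)$ is handled separately: it is just $\frac1\pi\int_{-\pi}^{\pi}c_m(x)\,dx$, an elementary integral of an even polynomial, and comparing with \eqref{zeta_parz} rewritten for $\zeta(2m+2)(1-2^{-2m-1})$ gives the stated closed form.

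The main obstacle I anticipate is not conceptual but combinatorial: showing that the "unwanted" lower-order terms cancel requires a binomial identity of the shape
\begin{equation*}
\sum_{j=0}^{m}\binom{2m+2}{2j}B_{2j}(2^{2j-1}-1)\,(\text{something})=0
\end{equation*}
for each partial power, and one must check that the recursion for $c_m''$ reproduces exactly the coefficients of $c_{m-1}$ with the correct sign $(-1)^{m}\to(-1)^{m-1}$ and factorial shift $(2m+2)!\to(2m)!$. I would reduce this to the defining recurrence of the Bernoulli numbers, $\sum_{j=0}^{r}\binom{r+1}{j}B_j=0$ for $r\ge 1$, applied in the "$2^{2j-1}-1$-twisted" form that arises from the identity $\sum_k \binom{r}{k}B_k(1/2)\,t^{\,r-k}=\,$ shifted Bernoulli polynomial at a half-integer; the twist factor $(2^{2j-1}-1)$ is exactly $2^{2j-1}(B_{2j}(1/2)/B_{2j})$ up to the relation $B_{2j}(1/2)=(2^{1-2j}-1)B_{2j}$. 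Once that single algebraic identity is in place, both \eqref{sniep} and the formula for $\mathcal F_0^{c}(c_m)$ follow by collecting terms, and the sine case is entirely parallel with $2m+2$ replaced by $2m+1$ and one fewer integration by parts.
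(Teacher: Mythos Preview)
Your approach is essentially the paper's: compute $\mathcal F_n^{s}(x^{2m+1})$ and $\mathcal F_n^{c}(x^{2m})$ by iterated integration by parts, substitute into the defining sums \eqref{wp}--\eqref{wn}, change the order of summation, and reduce everything to the single combinatorial identity $\sum_{l=0}^{N}\binom{2N+1}{2l}B_{2l}(2^{2l-1}-1)=0$ for $N\ge 1$ (with value $-1/2$ at $N=0$), which the paper pulls from \cite{Szab13} and you propose to derive from $B_{2j}(1/2)=(2^{1-2j}-1)B_{2j}$ together with the Bernoulli recurrence. The only caveats are cosmetic: your stated base-case normalizations are off (in fact $s_0(x)=-x/2$ and $c_0(x)=x^2/4$), and the alternative induction via $c_m''\sim c_{m-1}$ is not the route the paper takes and would need more care with boundary terms than you indicate, but your primary ``direct'' plan matches the paper's proof.
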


\begin{proof}
Is shifted to Section \ref{dowody}.
\end{proof}

\begin{remark}
Notice that we have just summed the two following Dirichlet series: $%
DC(s,x)=\sum_{j\geq 1}(-1)^{j}\frac{\cos (jx)}{j^{s}}$ and $%
DS(s,x)=\sum_{j\geq 1}(-1)^{j}\frac{\sin (jx)}{j^{s}}$, for particular
values of $s.$ Namely we have showed that 
\begin{eqnarray*}
DC(2m+2,x)\allowbreak &=&\allowbreak c_{m}(x)\allowbreak +\allowbreak \pi
^{2m+2}(-1)^{m+1}B_{2m+2}\frac{(2^{2m+1}-1)}{(2m+2)!}, \\
DS(2m+3,x)\allowbreak &=&\allowbreak s_{m+1}(x)
\end{eqnarray*}
and for $\left\vert x\right\vert \leq \pi ,$ $m\in \mathbb{N}_{0}$. What are
the values of these series for $s\neq 1,2,3,\ldots $?
\end{remark}

\begin{remark}
\label{ratio}Notice that $c_{m}(\frac{\pi l}{k})/\pi ^{2m+2}$ and $s_{m}(%
\frac{\pi l}{k})/\pi ^{2m+1}\allowbreak $ for all $k\in \mathbb{N},$ $m\in 
\mathbb{N}_{0}$ and $l\leq k$, are rational numbers.
\end{remark}

As a consequence we get the following theorem that allows calculating values
of $S(n,k,l)$ and $\hat{S}(n,k,l)$ for $\mathbb{N\ni }n,k\geq 2$ and $l\leq
k $.

\begin{theorem}
\label{parz}For $m\in \mathbb{N}_{0}$, $i\in \mathbb{N},\allowbreak $

i) 
\begin{gather}
c_{m}(\frac{(2l-1)\pi }{2i+1})\allowbreak =\allowbreak \frac{%
(-1)^{m}B_{2m+2}\pi ^{2m+2}}{(2m+2)!}(2^{2m+1}(1+\frac{1}{(2i+1)^{2m+2}})-1)
\label{cn} \\
+\allowbreak \sum_{j=1}^{i}(-1)^{j}\cos (\frac{j(2l-1)\pi }{2i+1}%
)S(2m+2,2i+1,j),  \notag
\end{gather}%
for $l\allowbreak =\allowbreak 1,\ldots ,i-1$.

ii) 
\begin{gather}
c_{m}(\frac{(2l-1)\pi }{2i})\allowbreak =\allowbreak \frac{\pi
^{2m+2}(-1)^{m}B_{2m+2}(2^{2m+1}-1)}{(2m+2)!}(1-(2i)^{-2m-2})\allowbreak
\label{cp} \\
+\allowbreak \sum_{j=1}^{i-1}(-1)^{j}\cos (\frac{(2l-1)j\pi }{2i})\hat{S}%
(2m+2,2i,j)\allowbreak ,  \notag
\end{gather}

for $l\allowbreak =\allowbreak 1,\ldots ,i$.

iii) 
\begin{gather}
c_{m}(\frac{2l\pi }{2i+1})\allowbreak =\allowbreak \frac{\pi
^{2m+2}(-1)^{m}B_{2m+2}(2^{2m+1}-1)}{(2m+2)!}(1-(2i+1)^{-2m-2})\allowbreak
\label{cnp} \\
+\allowbreak \sum_{j=1}^{i}(-1)^{j}\cos (\frac{2lj\pi }{2i+1})\hat{S}%
(2m+2,2i+1,j)\allowbreak ,  \notag
\end{gather}

for $l\allowbreak =\allowbreak 1,\ldots ,i$.

iv) For $m,k\in \mathbb{N}$: 
\begin{equation}
s_{m}(\frac{(2l-1)\pi }{2i+1})\allowbreak =\allowbreak
\sum_{j=1}^{i}(-1)^{j}\sin (\frac{(2l-1)j\pi }{2i+1})S(2m+1,2i+1,j),
\label{sn}
\end{equation}%
for $l\allowbreak =\allowbreak 1,\ldots i-1$.

v) 
\begin{gather}
s_{m}(\frac{(2l-1)\pi }{2i})\allowbreak =\allowbreak (-1)^{i+l+1}\frac{1}{%
i^{2m+1}}S(2m+1,4,1)\allowbreak  \label{sp} \\
+\allowbreak \sum_{j=1}^{i-1}(-1)^{j}\sin (\frac{(2l-1)j\pi }{2i})\hat{S}%
(2m+1,2i,j),  \notag
\end{gather}%
for $l\allowbreak =\allowbreak 1,\ldots i$.

vi) 
\begin{equation}
s_{m}(\frac{2l\pi }{2i+1})\allowbreak =\allowbreak
\sum_{j=1}^{i}(-1)^{j}\sin (\frac{2lj\pi }{2i+1})\hat{S}(2m+1,2i+1,j),
\label{snp}
\end{equation}
for $l\allowbreak =\allowbreak 1,\ldots ,i.$
\end{theorem}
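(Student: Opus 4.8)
The plan is to evaluate the Fourier expansion of $s_m$ and $c_m$ established in Lemma \ref{fourier} at the specific rational multiples of $\pi$ appearing in the statement, and then to recognize the resulting trigonometric sums as the combinations of $S$ and $\hat S$ defined via \eqref{SHZ}. Concretely, Lemma \ref{fourier} together with the Fourier inversion formula gives, for $|x|\le\pi$,
\begin{equation*}
c_m(x)=\tfrac12\mathcal F_0^c(c_m(.))+\sum_{n\ge 1}\frac{(-1)^n}{n^{2m+2}}\cos(nx),\qquad s_m(x)=\sum_{n\ge 1}\frac{(-1)^n}{n^{2m+1}}\sin(nx),
\end{equation*}
where $\tfrac12\mathcal F_0^c(c_m(.))=\zeta(2m+2)(1-2^{-2m-1})$. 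So the whole theorem reduces to substituting $x=\pi l'/k$ (with $l',k$ as dictated by each of the six cases) and reorganizing $\sum_{n\ge1}(-1)^n n^{-s}\cos(n\pi l'/k)$, respectively the sine version, according to the residue $n\bmod k$.

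The key reorganization step is this: writing $n=jk+r$ with $0\le r<k$ (or $1\le r\le k$), one has $\cos(n\pi l'/k)=\cos((jk+r)\pi l'/k)=\cos(jl'\pi+r\pi l'/k)=(-1)^{jl'}\cos(r\pi l'/k)$, and similarly $(-1)^n=(-1)^{jk+r}$. Thus the sum over $n$ splits as $\sum_{r}\cos(r\pi l'/k)\sum_{j}(\pm1)^{j}(jk+r)^{-s}$, and the inner sum is exactly one of $S(s,k,r)$ or $\hat S(s,k,r)$ depending on the parity of $k$ and of $l'$ — this is where the three sub-cases (i)--(iii) for $c_m$ and (iv)--(vi) for $s_m$ come from: $k$ odd versus even, and $l'$ odd versus even, control whether $(-1)^{jk}(\pm1)^j$ equals $1$ or $(-1)^j$. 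One must also treat the $r=0$ (and $r=k$) residue classes separately: these produce the term $\sum_j(\pm1)^j(jk)^{-s}=k^{-s}\cdot(\text{a }\zeta\text{ or }\hat\zeta\text{ value})$, which combines with $\tfrac12\mathcal F_0^c(c_m(.))$ to give the explicit Bernoulli-number closed forms written on the right-hand sides of \eqref{cn}, \eqref{cp}, \eqref{cnp}; in the sine cases the $r=0$ class contributes nothing since $\sin(0)=0$, except in case (v) where the exceptional $r=k/2$ type class (when $k=2i$) survives and yields the $S(2m+1,4,1)$ term via \eqref{S1_2} and the known value of $S(2m+1,2,1)$.

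Finally one uses the symmetry relations \eqref{S**}, namely $S(n,k,k-l)=(-1)^nS(n,k,l)$ and $\hat S(n,k,k-l)=(-1)^{n+1}\hat S(n,k,l)$, to fold the residues $r$ and $k-r$ together and collapse the range of $r$ down to $1\le j\le i$ (or $i-1$), matching the stated upper limits of summation; the cosine factors pair up correctly because $\cos((k-r)\pi l'/k)=\cos(r\pi l'/k)$ up to the sign $(-1)^{l'}$, which is precisely absorbed by the sign in \eqref{S**} with $n=2m+2$ (even, so the sign is $+$) or $n=2m+1$. The computation of the exceptional constant terms — reducing $\zeta(2m+2)(1-2^{-2m-1})\pm k^{-2m-2}(\text{partial }\zeta\text{ value})$ to the displayed Bernoulli expression using \eqref{zeta_parz} and the multiplication formula \eqref{HZ} — is the only place where genuine (though routine) algebra is needed, and is the step I expect to be the main bookkeeping obstacle; everything else is the substitution and the two symmetry reductions just described.
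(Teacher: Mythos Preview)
Your proposal is correct and follows essentially the same route as the paper: start from the Fourier expansion of $c_m$ and $s_m$ given by Lemma~\ref{fourier}, split the series $\sum_{n\ge1}(-1)^n n^{-s}\cos(n\pi l'/k)$ (resp.\ $\sin$) according to the residue of $n$ modulo $k$, separate into cases by the parity of $k$ and $l'$, and then fold the residues $r$ and $k-r$ together using the symmetry relations to obtain the $S$ or $\hat S$ sums. The paper carries out one case in detail (case (i), $k=2i+1$, $l$ odd) and declares the others analogous, exactly as you outline; your identification of the exceptional $r=i$ residue in case (v) as the source of the $S(2m+1,4,1)$ term is also on target.
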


\begin{proof}
Is shifted to Section \ref{dowody}.
\end{proof}

As a simple corollary we get some particular values $S(n,k,l)$

\begin{corollary}
i) 
\begin{equation*}
\frac{1}{\pi ^{2m+2}}\hat{S}(2m+2,4,1)\allowbreak =\allowbreak -\frac{\sqrt{2%
}}{\pi ^{2m+2}}(c_{m}(\pi /4)\allowbreak -\allowbreak \allowbreak \zeta
(2m+2)(1-2^{-2m-1})(1-4^{-2m-2})),
\end{equation*}%
$,$ $m\allowbreak =\allowbreak 0,1,2,\ldots ,$ 
\begin{equation*}
\frac{1}{\pi ^{2m+1}}\hat{S}(2m+1,4,1)\allowbreak =\allowbreak -\sqrt{2}(%
\frac{1}{\pi ^{2m+1}}s_{m}(\pi /4)\allowbreak -\allowbreak
(-1)^{m}E_{2m}/(2^{4m+3}(2m)!)),
\end{equation*}
in particular $\hat{S}(2,4,1)/\pi ^{2}\allowbreak =\allowbreak \sqrt{2}/16,$ 
$\hat{S}(3,4,1)/\pi ^{3}\allowbreak =\allowbreak 3\sqrt{2}/128$

ii) 
\begin{eqnarray*}
\frac{2}{\pi ^{2m+2}}S(2m+2,8,1)\allowbreak &=&-\frac{\sqrt{2}}{\pi ^{2m+2}}%
(c_{m}(\pi /4)\allowbreak -\allowbreak \allowbreak \zeta
(2m+2)(1-2^{-2m-1})(1-4^{-2m-2}))\allowbreak +\frac{1}{\pi ^{2m+2}}\zeta
(2m+2)(1-2^{-2m-2}), \\
\frac{2}{\pi ^{2m+2}}S(2m+2,8,3)\allowbreak &=&\frac{\sqrt{2}}{\pi ^{2m+2}}%
(c_{m}(\pi /4)\allowbreak -\allowbreak \allowbreak \zeta
(2m+2)(1-2^{-2m-1})(1-4^{-2m-2}))\allowbreak +\frac{1}{\pi ^{2m+2}}\zeta
(2m+2)(1-2^{-2m-2}).
\end{eqnarray*}%
In particular $S(2,8,1)\allowbreak =\allowbreak \pi ^{2}(1+\sqrt{2}/2)/16,$ $%
S(2,8,3)\allowbreak =\allowbreak \pi ^{2}(1-\sqrt{2}/2)/16,$ $%
S(4,8,1)\allowbreak =\allowbreak \pi ^{4}(1+11\sqrt{2}/16)/192,$ $%
S(2m+2,8,3)\allowbreak =\allowbreak \pi ^{4}(1-11\sqrt{2}/16)/192.$

iii) $S(3,8,1)\allowbreak =\allowbreak \pi ^{3}(1-3\sqrt{2}/4)/32,$ $%
S(3,8,3)\allowbreak =\allowbreak \pi ^{3}(1+3\sqrt{2}/4)/32,$ $%
S(5,8,1)\allowbreak =\allowbreak 5\pi ^{5}(1-57\sqrt{2}/16)/1536,S(5,8,1)%
\allowbreak =\allowbreak 5\pi ^{5}(1+57\sqrt{2}/16)/1536.$

iv) For $m\in \mathbb{N}_{0}$ 
\begin{eqnarray*}
\sum_{j=0}^{m}\binom{2m+2}{2j}B_{2j}(2^{2j-1}-1)/9^{(m-j)+1}
&=&-B_{2m+2}(4^{m}-1+4^{m}/3^{2m+1}), \\
\sum_{j=0}^{m}\binom{2m+2}{2j}B_{2j}(2^{2j-1}-1)2^{2j}\allowbreak
&=&\allowbreak -B_{2m+2}(2^{2m+1}-1)(2^{2m+2}-1).
\end{eqnarray*}

v) For $m\in \mathbb{N}:$%
\begin{equation}
\frac{2(2m+1)!}{\pi ^{2m+1}}S(2m+1,3,1)\allowbreak =\allowbreak (-1)^{m+1}%
\frac{4\sqrt{3}}{3}\sum_{j=0}^{m}\binom{2m+1}{2j+1}%
B_{2(m-j)}(2^{2(m-j)-1}-1)/3^{2j+1}.  \label{S3}
\end{equation}
\end{corollary}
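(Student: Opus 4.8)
The plan is to evaluate the polynomial $s_{m}$ at the single point $x=\pi/3$ in two different ways and compare. On one hand, \eqref{wn} gives directly
\[
s_{m}(\pi/3)=\frac{(-1)^{m}\pi^{2m+1}}{3\,(2m+1)!}\sum_{j=0}^{m}\binom{2m+1}{2j}\frac{B_{2j}(2^{2j-1}-1)}{3^{2(m-j)}}.
\]
On the other hand, by Lemma \ref{fourier} together with Fourier inversion one has, for $|x|\le\pi$,
\[
s_{m}(x)=\sum_{n\ge1}\frac{(-1)^{n}}{n^{2m+1}}\sin(nx),
\]
which converges at the interior point $x=\pi/3$ (this is the boundary instance $i=l=1$ of the computation underlying Theorem \ref{parz}(iv)).

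The arithmetic half of the proof consists in summing this last series at $x=\pi/3$ by residue classes modulo $6$. The numbers $\sin(n\pi/3)$ are $6$-periodic and vanish when $3\mid n$; a short check shows that $(-1)^{n}\sin(n\pi/3)$ equals $-\tfrac{\sqrt3}{2},\tfrac{\sqrt3}{2},0,-\tfrac{\sqrt3}{2},\tfrac{\sqrt3}{2},0$ for $n\equiv1,2,3,4,5,0\pmod6$. Recalling that for an odd exponent one has $S(2m+1,6,l)=\sum_{n\equiv l}n^{-(2m+1)}-\sum_{n\equiv-l}n^{-(2m+1)}$, the sums running over positive $n$ in the indicated classes mod $6$, this yields $s_{m}(\pi/3)=\tfrac{\sqrt3}{2}\bigl(-S(2m+1,6,1)+S(2m+1,6,2)\bigr)$. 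Then \eqref{S1_2} applied with $n=2m+1$, $k=3$, $l=1$ gives $S(2m+1,3,1)=S(2m+1,6,1)-S(2m+1,6,2)$, hence $s_{m}(\pi/3)=-\tfrac{\sqrt3}{2}S(2m+1,3,1)$.

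Putting the two evaluations together gives $S(2m+1,3,1)=-\tfrac{2}{\sqrt3}\,s_{m}(\pi/3)$, and multiplying by $2(2m+1)!/\pi^{2m+1}$ produces the claimed identity up to rewriting the sum: substituting $j\mapsto m-j$ and using $\binom{2m+1}{2(m-j)}=\binom{2m+1}{2j+1}$ converts $\sum_{j}\binom{2m+1}{2j}B_{2j}(2^{2j-1}-1)3^{-2(m-j)}$ into $\sum_{j}\binom{2m+1}{2j+1}B_{2(m-j)}(2^{2(m-j)-1}-1)3^{-2j}$, after which the prefactor $-\tfrac{2}{\sqrt3}\cdot\tfrac13=-\tfrac{2\sqrt3}{9}$ together with the leftover $3^{-1}$ assembles into $(-1)^{m+1}\tfrac{4\sqrt3}{3}$. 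The only genuinely delicate point is the residue-class bookkeeping at $x=\pi/3$: getting the six signs of $(-1)^{n}\sin(n\pi/3)$ right and matching them against the definitions of $S(2m+1,6,1)$ and $S(2m+1,6,2)$; everything after that is the routine reindexing and constant-chasing just sketched.
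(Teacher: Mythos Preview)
Your argument for part (v) is correct and is essentially the paper's own proof: the paper simply says ``set $i=1$, $l=1$ in \eqref{sn}'', which gives $s_{m}(\pi/3)=-\sin(\pi/3)\,S(2m+1,3,1)=-\tfrac{\sqrt3}{2}\,S(2m+1,3,1)$ directly, and you have unpacked that same identity by hand via the Fourier expansion and residue classes mod $6$ together with \eqref{S1_2}. One small slip in your final constant-chasing: after multiplying by $2(2m+1)!/\pi^{2m+1}$ the prefactor is $2\cdot(-\tfrac{2}{\sqrt3})\cdot\tfrac13=-\tfrac{4\sqrt3}{9}$, not $-\tfrac{2\sqrt3}{9}$; with that correction the constants do assemble into $(-1)^{m+1}\tfrac{4\sqrt3}{3}\cdot 3^{-(2j+1)}$ as claimed.
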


\begin{proof}
i) We apply Theorem \ref{parz}, (\ref{cp}) and (\ref{sp}) with $i\allowbreak
=\allowbreak 2.$ ii) We use (\ref{S1_2}) and (\ref{S*}). iii) We argue in a
similar manner. iv) We set $i\allowbreak =\allowbreak 1$ and $l\allowbreak
=\allowbreak 1$ in (\ref{cn}) and use the fact that $S(2m+2,3,1)\allowbreak
=\allowbreak \frac{\pi ^{2m+2}}{2(2m+2)!}(\frac{2}{3}%
)^{2m+2}(-1)^{m+1}(3^{2m+2}-1)B_{2m+2}.$ Then we set $i\allowbreak
=\allowbreak 1$ and $l\allowbreak =\allowbreak 1$ in (\ref{cp}) and then
cancel out $\pi ^{2m+2}/(2m+2)!.$ v) We set $i\allowbreak =\allowbreak 1,$
and $l\allowbreak =\allowbreak 1$ in (\ref{sn}) .
\end{proof}

Based on the above mentioned theorem we can deduce our main result:

\begin{theorem}
\label{alg}For every $\mathbb{N\ni }k,n\geq 2,$ and $1\allowbreak \leq
\allowbreak l\leq k$ the numbers $S(n,k,l)/\pi ^{n}$ and $\hat{S}(n,k,l)/\pi
^{n}$ are algebraic.
\end{theorem}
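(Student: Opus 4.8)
The plan is to extract $S(n,k,l)/\pi^n$ and $\hat S(n,k,l)/\pi^n$ from the identities in Theorem \ref{parz} by viewing each family (\ref{cn})--(\ref{snp}) as a linear system whose unknowns are precisely the normalized sums and whose coefficient matrix and right-hand side consist of algebraic numbers. First I would fix $n$ and $k$ and reduce to $l\le k/2$ using (\ref{S**}), so that it suffices to treat the sums $S(n,k,l)$ and $\hat S(n,k,l)$ for $1\le l\le \lfloor k/2\rfloor$. Next, splitting into the parity cases $n=2m+2$ and $n=2m+1$ and into $k$ odd versus $k$ even, each of the six parts of Theorem \ref{parz} gives, as $l$ ranges over its stated interval, a system of the shape
\begin{equation*}
\sum_{j=1}^{i}(-1)^{j}\cos\!\Big(\tfrac{\,j\cdot(\text{arg})\,}{\text{den}}\Big)\,T(n,k,j)\;=\;R_{l},
\end{equation*}
(and similarly with $\sin$), where each $T(n,k,j)$ is one of $S(n,2i+1,j),\hat S(n,2i,j),\hat S(n,2i+1,j)$, etc., and where by Remark \ref{ratio} together with (\ref{zeta_parz}) the left-hand coefficients are of the form $\cos(\pi q)$ or $\sin(\pi q)$ with $q\in\mathbb{Q}$ — hence algebraic — while the right-hand side $R_l$ equals $c_m(\pi l'/k')/\pi^{2m+2}$ or $s_m(\pi l'/k')/\pi^{2m+1}$ plus an explicit rational multiple of $B_{2m+2}$, and is therefore algebraic (again Remark \ref{ratio}).

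The heart of the argument is then linear algebra over $\overline{\mathbb{Q}}$: if the coefficient matrix of one of these systems is invertible, then by Cramer's rule each $T(n,k,j)/\pi^n$ is a ratio of determinants of algebraic matrices, hence algebraic. The coefficient matrices here are (up to the harmless signs $(-1)^j$ and a permutation of columns) matrices of the form $\big(\cos(\pi a_l a_j/N)\big)_{l,j}$ or $\big(\sin(\pi a_l a_j/N)\big)_{l,j}$ built from the primitive residues modulo $N=2i$ or $N=2i+1$; these are essentially the discrete-cosine / discrete-sine transform matrices restricted to the appropriate index sets, and their nonsingularity is a standard orthogonality fact. To keep the exposition self-contained I would instead run an induction on $k$: formulas (\ref{S1_2}), (\ref{S*}) and (\ref{SHZ}) express $S(n,k,l)$, $\hat S(n,k,l)$ in terms of $S(n,2k,\cdot)$ and of Hurwitz values, and conversely (\ref{HZ}) lets one pass between level $k$ and level $2k$; combined with the already-known closed forms at $k=2,3,4,6$ recalled in the Introduction, one bootstraps algebraicity from small moduli. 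Either way the key step — and the main obstacle — is verifying that the relevant linear system actually determines all the sums in its range, i.e. that the cosine/sine matrix indexed by $l=1,\dots,i$ (resp. $i-1$) is of full rank; this is where one must be careful, since Theorem \ref{parz} sometimes supplies one fewer equation than there are unknown sums, and the missing relation has to be recovered either from the $\mathcal F_0^c$ normalization in Lemma \ref{fourier} or from the reflection identities (\ref{S**}).

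Finally I would assemble the cases. For $k$ odd, parts (i), (iii), (iv), (vi) of Theorem \ref{parz} cover $S$ and $\hat S$ in even and odd weight; for $k$ even, parts (ii) and (v) cover the $\hat S$ sums at the halved modulus, and then (\ref{S1_2})--(\ref{S*}) recover the $S$ sums, while the evenness cases $S(2l+1,2,1)=0$, $\hat S(2l,2,1)=0$ noted in the Introduction dispose of the degenerate instances. Since each $S(n,k,l)/\pi^n$ and $\hat S(n,k,l)/\pi^n$ with $l\le k/2$ has been shown algebraic, (\ref{S**}) extends the conclusion to all $1\le l\le k$, completing the proof. The only genuine computation is checking the determinant condition, and I expect that to be the crux of writing it up carefully.
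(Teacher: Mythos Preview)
Your proposal follows essentially the same route as the paper: read off from Theorem~\ref{parz} a square linear system in the unknowns $S(n,k,j)/\pi^n$ or $\hat S(n,k,j)/\pi^n$, observe that both the coefficient matrix and the right-hand side have algebraic entries (Remark~\ref{ratio} plus (\ref{zeta_parz})), and conclude by Cramer once the matrix is shown invertible; the $S$-sums at even modulus are then recovered from (\ref{S1_2})--(\ref{S*}) by an induction on the $2$-adic part of $k$, exactly as in the paper. The one place where the paper is more explicit than you is the invertibility step: rather than appealing to DCT/DST orthogonality (which does not literally apply to the half-integer-shifted index sets here), the paper isolates this as Lemma~\ref{aux} and proves it via Krattenthaler's determinant identities for matrices of the form $[x_l^j - x_l^{-j}]$ and $[x_l^{j-1/2} - x_l^{-(j-1/2)}]$, which is precisely the ``crux'' you flagged as still to be written up.
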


\begin{proof}
First of all recall that in the Introdution we have found $S(n,2,1)$ and $%
\hat{S}(n,2,1)$ (compare equations (\ref{sp}) and (\ref{cp})). Further
analyzing equations (\ref{cn})-(\ref{snp}) we notice that quantities $%
S(2m+2,2i+1,j),$ $\hat{S}(2m+2,2i,j),$ $\hat{S}(2m+2,2i+1,j),$ $%
S(2m+1,2i+1,j),$ $\hat{S}(2m+1,2i,j)$ satisfy certain systems of linear
equations with matrices $\allowbreak \allowbreak \lbrack (-1)^{j}\cos \frac{%
(2l-1)j\pi }{n}]_{l,j=1,\ldots ,\left\lfloor (n-1)/2\right\rfloor
},\allowbreak \lbrack (-1)^{j}\sin \frac{(2l-1)j\pi }{n}]_{l,j=1,\ldots
,\left\lfloor (n-1)/2\right\rfloor }$ and $[(-1)^{j}\cos \frac{2lj\pi }{n}%
]_{l,j=1,\ldots ,\left\lfloor (n-1)/2\right\rfloor },\allowbreak $\newline
$[(-1)^{j}\sin \frac{2lj\pi }{n}]_{l,j=1,\ldots ,\left\lfloor
(n-1)/2\right\rfloor }$. These matrices are non-singular since their
determinants differ form the determinants of matrices $C_{n},S_{n},$ $%
C_{n}^{\ast }$ and $S_{n}^{\ast }$ (defined by (\ref{CnSn}) and (\ref{Cn*Sn*}%
), below) only possibly in sign. As shown in Lemma \ref{aux} matrices $%
C_{n},S_{n},$ $C_{n}^{\ast }$ and $S_{n}^{\ast }$ are nonsingular. Moreover
notice that all entries of these matrices are algebraic numbers. On the
other hand as one can see the numbers $c_{m}(l\pi /k)/\pi ^{2m+2}$ and $%
s_{m}(l\pi /k)/\pi ^{2m+1}$ for all $\mathbb{N\ni }m,$ $k,$ $l<\left\lfloor
k/2\right\rfloor $ are rational. Hence solutions of these equations are
algebraic numbers. Now taking into account that for $k\allowbreak
=\allowbreak 2$ we do not get in fact any equation for $\hat{S}(n,2,1)$ .
Thus we deduce that for $n,k,l\in \mathbb{N}$ such that $n\geq 2,k>2,$ $%
l\leq k-1$ the numbers $\hat{S}(n,l,k)/\pi ^{n}$ and for $n\geq 2,$ odd $k$
and $l\leq k-1$ the numbers $S(n,k,l)/\pi ^{n}$ are algebraic.

To show that numbers $S(n,l,k)/\pi ^{n}$ are algebraic also for even $l$ we
refer to (\ref{S1_2}) and (\ref{S*}). First taking $l$ odd we see that $%
S(n,2l,k)/\pi ^{n}$ for all $k\leq l$ (the case $k\allowbreak =\allowbreak l$
leads to $S(n,2,1)$ considered by the end of Introduction) these numbers are
algebraic. Then taking $l\allowbreak =\allowbreak 2m$ with $m$ even we
deduce in the similar way that $S(n,4m,k)/\pi ^{n},$ $k\leq 2m$ are
algebraic. And so on by induction we deduce that indeed for all even $l$ and 
$k\leq l/2$ numbers $S(n,l,k)$ are algebraic.
\end{proof}

As an immediate corollary of Theorem \ref{alg} and (\ref{S1_2}) and (\ref{S*}%
) we have the following result:

\begin{proposition}
For $1<n\in \mathbb{N}$ and $p\in \mathbb{Q\cap (}0,1\mathbb{)}$ $:$ \newline
the numbers $(\zeta (n,p)+(-1)^{n}\zeta (n,1-p))/\pi ^{n}$ and $(\hat{\zeta}%
(n,p)+(-1)^{n-1}\hat{\zeta}(n,1-p))/\pi ^{n}$ are algebraic.
\end{proposition}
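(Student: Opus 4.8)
The plan is to derive the Proposition directly from Theorem~\ref{alg} by unwinding the definitions that connect $S$ and $\hat S$ to values of the Hurwitz zeta function. Recall from (\ref{SHZ}) that for $n,k,l\in\mathbb{N}$ with $n,k\geq 2$ and $1\leq l<k$,
\begin{equation*}
S(n,k,l)=\frac{1}{k^{n}}\Bigl(\zeta(n,\tfrac{l}{k})+(-1)^{n}\zeta(n,1-\tfrac{l}{k})\Bigr),\qquad
\hat S(n,k,l)=\frac{1}{k^{n}}\Bigl(\hat\zeta(n,\tfrac{l}{k})+(-1)^{n+1}\hat\zeta(n,1-\tfrac{l}{k})\Bigr).
\end{equation*}
First I would fix $1<n\in\mathbb{N}$ and a rational $p\in(0,1)$, and write $p=l/k$ in lowest terms, so that $k\geq 2$ and $1\leq l<k$. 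Then the combination $\zeta(n,p)+(-1)^{n}\zeta(n,1-p)$ is exactly $k^{n}S(n,k,l)$, and since $k^{n}$ is a (rational, hence) algebraic number, the ratio $(\zeta(n,p)+(-1)^{n}\zeta(n,1-p))/\pi^{n}=k^{n}\cdot S(n,k,l)/\pi^{n}$ is a product of two algebraic numbers by Theorem~\ref{alg}, hence algebraic. The same argument applied to the second display gives that $(\hat\zeta(n,p)+(-1)^{n-1}\hat\zeta(n,1-p))/\pi^{n}=k^{n}\cdot\hat S(n,k,l)/\pi^{n}$ is algebraic, using the $\hat S$ part of Theorem~\ref{alg}.

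The only mild subtlety is a boundary bookkeeping point: Theorem~\ref{alg} is stated for $1\leq l\leq k$, and in writing $p=l/k$ in lowest terms we automatically land in the range $1\leq l<k$ (the case $l=k$ would mean $p=1\notin(0,1)$), so there is nothing to check there. One should also note that the reduced denominator $k$ may be $2$; in that case $l=1$, $p=1/2$, and the relevant quantities $S(n,2,1)$, $\hat S(n,2,1)$ were already computed explicitly at the end of the Introduction (and are covered by Theorem~\ref{alg} as remarked in its proof), so the $k=2$ case causes no problem. No genuine obstacle arises: the Proposition is a direct algebraic repackaging of Theorem~\ref{alg} via the identities (\ref{SHZ}), and the work has all been done upstream. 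If one wanted to be slightly more careful, the main (still routine) point to verify is that reducing the fraction $p=l/k$ does not change the value of the left-hand side — which is clear since $\zeta(n,\cdot)$ and $\hat\zeta(n,\cdot)$ are honest functions of their second argument, so $\zeta(n,p)$ is well defined independently of the representation of $p$.

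I expect the entire proof to be two or three sentences in the final paper: state $p=l/k$, invoke (\ref{SHZ}) to identify the displayed combinations with $k^{n}S(n,k,l)$ and $k^{n}\hat S(n,k,l)$, and conclude by Theorem~\ref{alg} together with closure of the algebraic numbers under multiplication. The "hard part," such as it is, is entirely contained in Theorem~\ref{alg} (and ultimately in Lemma~\ref{fourier} and the non-singularity of the Fourier-type matrices $C_{n},S_{n},C_{n}^{\ast},S_{n}^{\ast}$), so for this Proposition there is essentially nothing left to do beyond the substitution.
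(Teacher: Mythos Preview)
Your proposal is correct and matches the paper's approach: the paper also presents the Proposition as an immediate corollary of Theorem~\ref{alg}, with no separate proof given. In fact your citation of (\ref{SHZ}) is the apt one, since that is the identity actually linking $S$ and $\hat S$ to the Hurwitz zeta combinations; the paper's preamble cites (\ref{S1_2}) and (\ref{S*}) instead, which appears to be a mislabel.
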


\section{ Proofs\label{dowody}}

\begin{proof}[Proof of Lemma \protect\ref{fourier}]
We will need the following observations: For all $m\geq 0,$ $n\geq 1$: 
\begin{eqnarray}
\mathcal{F}_{n}^{s}(x^{2m+1})\allowbreak &=&\allowbreak (-1)^{n+1}\frac{%
2(2m+1)!}{n^{2m+1}}\sum_{k=0}^{m}(-1)^{k}\pi ^{2(m-k)}\frac{n^{2(m-k)}}{%
(2m+1-2k)!},  \label{_1} \\
\mathcal{F}_{n}^{c}(x^{2m}) &=&(-1)^{n}\frac{2(2m)!}{n^{2m}}%
\sum_{k=0}^{m-1}(-1)^{k}\pi ^{2(m-1-k)}\frac{n^{2(m-1-k)}}{(2m-1-2k)!}%
\allowbreak .  \label{_2}
\end{eqnarray}%
Let us denote: $b_{m}\allowbreak =\allowbreak \frac{1}{\pi }\int_{-\pi
}^{\pi }x^{2m+1}\sin (nx)dx$ and $a_{m}\allowbreak =\allowbreak \frac{1}{\pi 
}\int_{-\pi }^{\pi }x^{2m}\cos (nx)dx.$ Integrating by parts twice we obtain
the following recursions 
\begin{equation*}
b_{m}\allowbreak =\allowbreak 2(-1)^{n+1}\frac{\pi ^{2m-1}}{n}\allowbreak
-\allowbreak \frac{(2m+1)2m}{n^{2}}b_{m-1}\allowbreak ,a_{m}\allowbreak
=(-1)^{n}\frac{4m}{n^{2}}\pi ^{2m-1}\allowbreak +\allowbreak \frac{2m(2m-1)}{%
n^{2}}a_{m-1}
\end{equation*}%
with $b_{0}\allowbreak =\allowbreak (-1)^{n+1}\frac{2}{n},$ and $\allowbreak 
$ with $a_{0}\allowbreak =\allowbreak 0\ $for $m\geq 1$. Iterating them we
get (\ref{_1}) and (\ref{_2}).

We show (\ref{wp}) and (\ref{wn}) by straightforward computation. First let
us consider (\ref{wp}). Making use of (\ref{_2}) and after some algebra
including changing the order of summation and setting $l=m-k-j$ we get: 
\begin{equation*}
\mathcal{F}_{n}^{c}(c_{m}(.))\allowbreak =\allowbreak
(-1)^{m+n+1}\sum_{k=0}^{m}\frac{(-1)^{k}n^{-2(k+1)}}{(2m-2k+1)!}\pi
^{2(m-k)}\sum_{l=0}^{m-k}B_{2l}(2^{2l-1}-1)\binom{2m-2k+1}{2l}
\end{equation*}%
Now we use \cite{Szab13}(12) with $x=0$ and \cite{Szab13}(4) to show that 
\begin{equation*}
\sum_{l=0}^{m-k}\binom{2m-2k+1}{2l}B_{2l}(2^{2l-1}-1)\allowbreak
=\allowbreak \left\{ 
\begin{array}{ccc}
0 & if & k<m \\ 
-1/2 & if & k=m%
\end{array}%
\right.
\end{equation*}%
Hence indeed $\mathcal{F}_{n}^{c}(c_{m}(.))\allowbreak =\allowbreak \frac{%
(-1)^{n}}{n^{2m+2}}.$ To get (\ref{wp}) we proceed similarly and after some
algebra we get 
\begin{gather*}
\mathcal{F}_{n}^{s}(s_{m}(.))\allowbreak =\left( -1\right)
^{m+n}\sum_{k=0}^{m}\frac{(-1)^{k}\pi ^{2(m-k)}}{n^{2k+1}(2m-2k+1)!}%
\sum_{l=0}^{m-k}\binom{2m-2k+1}{2l}B_{2l}(2^{2j-1}-1) \\
=\frac{(-1)^{n+1}}{n^{2m+1}}.
\end{gather*}

To get $\frac{1}{\pi }\int_{-\pi }^{\pi }c_{m}(x)dx$ we proceed as follows:

\begin{eqnarray*}
\frac{1}{\pi }\int_{-\pi }^{\pi }c_{m}(x)dx\allowbreak &=&\frac{%
2(-1)^{m+1}\pi ^{2m+2}}{(2m+2)!}\sum_{j=0}^{m}\binom{2m+2}{2(m-j)}\frac{1}{%
(2j+3)}B_{2(m-j)}(2^{2(m-j)-1}-1)\allowbreak \\
&=&\allowbreak \frac{2(-1)^{m+1}\pi ^{2m+2}}{(2m+3)!}\sum_{k=0}^{m}\binom{%
2m+3}{2k}B_{2k}(2^{2k-1}-1)\allowbreak .
\end{eqnarray*}%
$\allowbreak $ Now we again manipulate \cite{Szab13} (4) and (12) to get the
desired formula.
\end{proof}

The proof of Theorem \ref{alg} is based on the Lemma concerning the
following matrices

\begin{gather}
C_{n}=[\cos \frac{(2l-1)j\pi }{n}]_{l,j=1,\ldots ,\left\lfloor
(n-1)/2\right\rfloor },~S_{n}=[\sin \frac{(2l-1)j\pi }{n}]_{l,j=1,\ldots
,\left\lfloor (n-1)/2\right\rfloor },  \label{CnSn} \\
C_{n}^{\ast }=[\cos \frac{2lj\pi }{n}]_{l,j=1,\ldots ,\left\lfloor
(n-1)/2\right\rfloor },~S_{n}^{\ast }=[\sin \frac{2lj\pi }{n}]_{l,j=1,\ldots
,\left\lfloor (n-1)/2\right\rfloor }.  \label{Cn*Sn*}
\end{gather}

\begin{lemma}
$\allowbreak $\label{aux}For every $\mathbb{N\ni }n\geq 2$ matrices $C_{n},$ 
$S_{n},$ $C_{n}^{\ast },$ $S_{n}^{\ast }$ are nonsingular.
\end{lemma}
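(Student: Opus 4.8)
The plan is to prove nonsingularity of each of the four matrices by relating their determinants to classical evaluations of Vandermonde-type or character-sum determinants over the roots of unity, or alternatively by a direct linear-algebra argument showing the rows are independent.

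First I would treat $C_n^{\ast}$ and $S_n^{\ast}$, since these are the cleanest: the entries $\cos\frac{2lj\pi}{n}$ and $\sin\frac{2lj\pi}{n}$ are the real and imaginary parts of $\omega^{lj}$ where $\omega=e^{2\pi i/n}$. The plan is to observe that the full $ (n-1)\times(n-1)$ (or $n\times n$) DFT-type matrix $[\omega^{lj}]$ is nonsingular (its determinant is a nonzero Vandermonde in the distinct powers of $\omega$), and that a suitable unitary change of basis pairing the index $j$ with $n-j$ block-diagonalizes it into $1\times1$ and $2\times2$ blocks built precisely from the cosine and sine entries indexed by $l,j=1,\dots,\lfloor (n-1)/2\rfloor$. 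Concretely, replacing columns $j$ and $n-j$ by $(\text{col}_j+\text{col}_{n-j})/2$ and $(\text{col}_j-\text{col}_{n-j})/(2i)$ turns $[\omega^{lj}]$ into a matrix whose nonzero minors force $\det C_n^{\ast}\neq 0$ and $\det S_n^{\ast}\neq 0$. One must be a little careful with the parity of $n$ (whether $j=n/2$ occurs) and with the exact size $\lfloor (n-1)/2\rfloor$, but this bookkeeping is routine.

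For $C_n$ and $S_n$, whose entries involve the half-integers $2l-1$, I would argue similarly but with $\omega^{1/2}=e^{\pi i/n}$: the entries $\cos\frac{(2l-1)j\pi}{n}$, $\sin\frac{(2l-1)j\pi}{n}$ are real and imaginary parts of $\zeta^{(2l-1)j}$ with $\zeta=e^{\pi i /n}$, a primitive $2n$-th root of unity. The odd exponents $2l-1$ for $l=1,\dots,\lfloor(n-1)/2\rfloor$ are distinct mod $2n$ and no two are negatives of each other mod $2n$, so again a Vandermonde argument in the distinct values $\zeta^{2l-1}$ combined with the same cosine/sine folding gives nonvanishing determinants. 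Alternatively, and perhaps more transparently, I would show directly that a nontrivial linear relation $\sum_l a_l \cos\frac{(2l-1)j\pi}{n}=0$ for all $j$ forces the trigonometric polynomial $\sum_l a_l\cos((2l-1)\theta)$ to vanish at enough points $\theta=j\pi/n$ to conclude all $a_l=0$, using that a nonzero cosine polynomial of this degree has boundedly many zeros in $[0,\pi]$.

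The main obstacle I anticipate is not any deep fact but the precise matching of dimensions: the matrices are indexed by $l,j=1,\dots,\lfloor(n-1)/2\rfloor$, and one has to check that after the folding of $j$ with $n-j$ (or $2n-j$) the relevant sub-block is exactly this square matrix and that the excluded indices ($j=0$, and $j=n/2$ when $n$ is even, or the Nyquist-type index in the $2n$ case) do not interfere. I would therefore split into cases $n$ even and $n$ odd, and within the odd-exponent matrices also track whether $2l-1$ can equal $n$. Once the correspondence with a genuine Vandermonde determinant in distinct roots of unity is set up, nonsingularity is immediate; the write-up is mostly careful indexing rather than new ideas.
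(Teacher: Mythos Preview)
Your plan is sound but follows a genuinely different route from the paper's. The paper does not fold a full DFT matrix; instead it writes the sine entries as $(x_l^{\,j}-x_l^{-j})/(2i)$ with $x_l=\exp\bigl(i(2l-1)\pi/n\bigr)$ for $S_n$ and $x_l=\exp(2il\pi/n)$ for $S_n^{\ast}$, and then invokes Krattenthaler's closed-form determinant evaluation (formula~(2.3) of \emph{Advanced determinant calculus}) for matrices of the shape $[x_l^{\,j}-x_l^{-j}]$, so that nonsingularity reduces to the trivial checks that the $x_l$ are nonzero, pairwise distinct, and satisfy $x_l x_k\neq 1$. For $C_n$ the paper rewrites $\cos\frac{(2l-1)j\pi}{n}$ as $(-1)^{l-1}\sin\frac{(2l-1)(j-1/2)\pi}{n}$, i.e.\ as $(x_l^{\,j-1/2}-x_l^{-(j-1/2)})/(2i)$, and cites the companion formula~(2.4); for $C_n^{\ast}$ it assumes $n=2m+1$ is odd and uses $\cos\frac{2lj\pi}{2m+1}=(-1)^{j}\cos\frac{(2(m-l+1)-1)j\pi}{2m+1}$ to reduce to the $C_n$ case. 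Your folding/block-diagonalisation argument is more self-contained (no external determinant identity needed) and arguably more conceptual, while the paper's approach is shorter once Krattenthaler is cited and avoids the index bookkeeping you anticipate.

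One caution for your write-up: the parity split you mention is not merely cosmetic for $C_n^{\ast}$. For $n=4$ one has $\lfloor(n-1)/2\rfloor=1$ and $C_4^{\ast}=[\cos(\pi/2)]=[0]$, which is singular; so no argument can establish the lemma for $C_n^{\ast}$ at all even $n$. The paper silently restricts to odd $n$ for $C_n^{\ast}$ (which is all that the application in Theorem~\ref{alg} actually requires), and your proof should make the same restriction explicit.
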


\begin{proof}
The proof is based on the following observations. Let $i$ denote unitary
unit. Since $\sin x\allowbreak =\allowbreak (\exp (ix)\allowbreak
-\allowbreak \exp (-ix))/2i$ one can easily notice that matrices $S_{n}$ and 
$S_{n}^{\ast }$ are of the type $[x_{l}^{j}-x_{l}^{j}]_{l,j=1,\ldots
,\left\lfloor (n-1)/2\right\rfloor }$ where $x_{l}$ equals respectively $%
\exp (\frac{(2l-1)i\pi }{n})$ and $\exp (\frac{2li\pi }{n})$ ($n$ is odd in
this case). Now we apply formula (2.3) of \cite{kratt99} and deduce that
these matrices are nonsingular provided that quantities $x_{l}$ are nonzero,
are all different, and $x_{l}x_{k}\allowbreak \neq \allowbreak 1,$ $%
k,l\allowbreak =\allowbreak 1,\ldots ,\left\lfloor (n-1)/2\right\rfloor $
which is the case in both situations. Now notice that $\forall n\geq 0:$ $%
\cos (\alpha )\allowbreak =\allowbreak (-1)^{n-1}\sin ((2n-1)\pi /2+\alpha
). $ Hence $\cos \frac{(2l-1)j\pi }{n}\allowbreak =\allowbreak
(-1)^{l-1}\sin ((2l-1)\pi /2+\frac{(2l-1)j\pi }{n})\allowbreak =\allowbreak
(-1)^{l-1}\sin (\frac{\pi (j-1/2)(2l-1)}{n})\allowbreak =\allowbreak
(-1)^{l-1}(x_{l}^{(j-1/2)}\allowbreak -\allowbreak x_{l}^{-(j-1/2)})/(2i)$
and we use formula (2.4) of \cite{kratt99} to deduce that matrix $C_{n}$ is
nonsingular. Now let us consider matrix $C_{n}^{\ast }$ . Notice that then $%
n\allowbreak =\allowbreak 2m+1$ for some natural $m.$ $\cos (\frac{2lj\pi }{%
2m+1})\allowbreak =\allowbreak (-1)^{j}\cos (j\pi -\frac{2lj\pi }{2m+1}%
)\allowbreak =\allowbreak (-1)^{j}\cos (\frac{j\pi (2m-l+1)-1)}{2m+1}).$ And
thus we have reduced this case to the previous one. Hence $C_{n}^{\ast }$ is
also nonsingular.
\end{proof}

\begin{proof}[Proof of Theorem \protect\ref{parz}]
In the proofs of all assertions we use basically the same tricks concerning
properties of trigonometric functions and changing order of summation. That
is why we will present proof of only one (out of 6) in detail. All remaining
proofs are similar. We start with an obvious identity: 
\begin{equation}
c_{m}(\frac{\pi l}{k})\allowbreak =\allowbreak \pi ^{2m+2}(-1)^{m}B_{2m+2}%
\frac{(2^{2m+1}-1)}{(2m+2)!}\allowbreak +\allowbreak \sum_{j=1}^{k}\cos (%
\frac{jl\pi }{k})(-1)^{j}\sum_{r=0}^{\infty }\frac{\left( -1\right) ^{r(k+l)}%
}{(rk+j)^{2m+2}}  \label{1_step}
\end{equation}%
that is true since by $\cos (n\pi +\alpha )\allowbreak =\allowbreak
(-1)^{n}\cos \alpha $ we have: $\allowbreak $%
\begin{equation*}
\sum_{n\geq 1}^{\infty }(-1)^{n}\cos (\frac{n\pi l}{k})/n^{2m+2}\allowbreak
=\allowbreak \sum_{j=1}^{k}\sum_{r=0}^{\infty }\left( -1\right)
^{r(k+l)+j}\cos (\frac{j\pi l}{k})/(rk+j)^{2m+2}\allowbreak ,
\end{equation*}%
$\allowbreak \allowbreak $

Hence we have to consider two cases $k+l$ odd and $k+l$ even.

If $k+l$ is even then we have 
\begin{equation*}
\sum_{n\geq 1}^{\infty }(-1)^{n}\cos (\frac{n\pi l}{k})/n^{2m+2}\allowbreak =%
\frac{1}{k^{2m+2}}\sum_{j=1}^{k}(-1)^{j}\cos (\frac{\pi jl}{k})\zeta
(2m+2,j/k).
\end{equation*}

For $k\allowbreak =\allowbreak 2i+1$ and $l$ odd we have: $%
\sum_{j=1}^{2i+1}(-1)^{j}\cos (j\pi /(2i+1))\zeta (2m+2,j/(2i+1))$. Now
recall that for $j\allowbreak =\allowbreak 2i+1$ we get $\cos (\pi
)\allowbreak =\allowbreak -1$ and that for $j\allowbreak =\allowbreak
1,\ldots ,i$ we have $\cos ((2i+1-j)l\pi /(2i+1)\allowbreak =\allowbreak
-\cos (jl\pi /(2i+1)\allowbreak $ and $(-1)^{2i+1-j}\allowbreak =\allowbreak
-(-1)^{j}.$ Thus we get 
\begin{gather*}
\sum_{j=1}^{2i+1}(-1)^{j}\cos (\frac{lj\pi }{2i+1})\zeta (2m+2,\frac{j}{2i+1}%
)=\zeta (2m+2) \\
+\sum_{j=1}^{i}(-1)^{j}\cos (\frac{jl\pi }{2i+1})(\zeta (2m+2,\frac{j}{2i+1}%
)+\zeta (2m+2,1-\frac{j}{2i+1})).
\end{gather*}%
Now we use (\ref{S1_2}). Since $k+l$ even and $k$ even leads to triviality
we consider two cases leading to $k+l$ odd i.e. $k\allowbreak =\allowbreak
2i $ and $l=2m-1$ odd $m<i$ which leads to:%
\begin{gather*}
\sum_{j=1}^{2i}\cos (\frac{j(2m-1)\pi }{2i})(-1)^{j}\sum_{r=0}^{\infty
}\left( -1\right) ^{r(2i++2m+1)}/(r(2i+1)+j)^{2m+2}\allowbreak \\
=\allowbreak (-1)^{m}B_{2m+2}\frac{\pi ^{2m+2}}{(2m+2)!}\frac{2^{2m+1}-1}{%
2^{2m+2}i^{2m+2}} \\
+\sum_{j=1}^{i-1}(-1)^{j}\cos (\frac{jl\pi }{2i}))(S(2m+2,4i,j)\allowbreak
-\allowbreak S(2m+2,4i,2i-j)).
\end{gather*}%
We used here (\ref{zeta_parz}) (\ref{S1_2}) and (\ref{1_step}).

Similarly we consider the case $k=2i+1$ and $l=2m$ which leads to (\ref{cnp}%
). Note that this case is very similar to the case $k\allowbreak
=\allowbreak 2i+1$ $l\allowbreak =\allowbreak 2m-1.$

Assertions iv)-vi) are proved in the similar way. The only difference is
that we do not have to remember about $\frac{1}{2}\mathcal{F}_{0}^{c}$ and
that we utilize properties of $\sin $ function not of $\cos .$
\end{proof}

\end{document}